\let\orgdescriptionlabel\descriptionlabel
\renewcommand*{\descriptionlabel}[1]{%
  \let\orglabel\label
  \let\label\@gobble
  \phantomsection
  \edef\@currentlabel{#1\unskip}%
  \let\label\orglabel
  \orgdescriptionlabel{#1}%
}
\let\oldtocsection=\tocsection
\let\oldtocsubsection=\tocsubsection
\renewcommand{\tocsection}[2]{\hspace{0em}\oldtocsection{#1}{#2}}
\renewcommand{\tocsubsection}[2]{\hspace{1em}\oldtocsubsection{#1}{#2}}
\newtheorem{lemma}{Lemma}
\newtheorem{theorem}[lemma]{Theorem}
\newtheorem{corollary}[lemma]{Corollary}
\theoremstyle{definition}
\newtheorem{definition}[lemma]{Definition}
\newtheorem{remark}[lemma]{Remark}
\numberwithin{lemma}{section}
\numberwithin{equation}{section}
\DeclareMathOperator{\Div}{div}
\newcommand*\mR{\mathbb{R}}
\newcommand*\mL{\mathcal{L}}
\newcommand*\eps{\varepsilon}
\newcommand*\alow{\alpha}
\subjclass[2020]{49L12, 35S10, 35A01, 35K08, 45K05, 35K61, 35B65, 35A09}
\keywords{Viscous Hamilton--Jacobi equation, Schauder estimates, fractional operators, nonlocal operators, Duhamel formula, classical solutions, well-posedness}
\title[Schauder theory for nonlocal viscous HJ equations]{Towards a Schauder theory for fractional viscous Hamilton--Jacobi
equations
}
\author{Espen R. Jakobsen}
\address{ Department of Mathematical Sciences, Norwegian University of Science and Technology, Trondheim, Norway}
\email{espen.jakobsen@ntnu.no}
\author{Artur Rutkowski}
\address{Department of Mathematical Sciences, Norwegian University of Science and Technology, Trondheim, Norway}
\address{Faculty of Pure and Applied Mathematics,
Wroc\l aw University of Science and Technology, Wroc\l aw, Poland}
\email{artur.rutkowski@pwr.edu.pl}
\begin{document}

\maketitle
\begin{abstract}
    We survey some results on Lipschitz and Schauder regularity estimates for viscous Hamilton--Jacobi
    equations with subcritical L\'evy diffusions. 
    The Schauder estimates, along with existence of smooth solutions, are obtained with the help of a Duhamel formula and $L^1$ bounds on the spatial derivatives of the heat kernel. 
    Our results cover very general nonlocal and mixed local-nonlocal diffusions, including strongly anisotropic, nonsymmetric, mixed order, and spectrally one-sided models.
\end{abstract}

\section{Introduction}
The goal of this note is an elementary exposition of some existing results on classical well-posedness and Schauder regularity theory for viscous Hamilton--Jacobi (HJ) equations. We are interested in the following problem:
 \begin{align}\label{eq:HJ}
    \begin{cases}
     \partial_t u - \mL u + H(t,x,u,Du) = f(t,x)\quad &{\rm in}\ (0,T)\times \mR^d,\\
     u(0) = u_0\quad &{\rm in}\ \mR^d.
     \end{cases}
 \end{align}
 Here, $\mL$ is a translation invariant diffusion operator of the form:
\begin{equation}\label{eq:operator}
    \mL u = B\cdot Du + \Div(A Du)+ \int_{\mR^d}(u(\cdot+z) - u(\cdot) - Du(\cdot)\cdot z \textbf{1}_{B(0,1)}(z))\, \nu(dz),
\end{equation}
 with L\'evy triplet $(B,A,\nu)\in \mR^d \times \mR^{d\times d}\times \mathcal M_+(\mR^d)$ where 
 $A\geq0$ 
 and the L\'evy measure $\nu$ is a 
 nonnegative Borel measure satisfying $\int_{\mR^d} (1 \wedge |z|^2)\nu(dz) < \infty$. The function $H\in C((0,T)\times\mR^d\times \mR\times \mR^d)$ is called the Hamiltonian.  Throughout the paper we assume that the operator $\mL$ is subcritical, i.e. a negative operator of order $\alpha\in(1,2]$. The order is $2$ when $A>0$, otherwise it depends on the behavior of $\nu$ near $z=0$. 
The order condition is implicitly formulated through the heat kernel of $\mL$, see Section~\ref{sec:assume} below.  Because the linear diffusion term is the principal term, we call \eqref{eq:HJ} a \textit{viscous} HJ equation. 
\begin{definition}
 We say that $u$ is a classical solution to \eqref{eq:HJ} if $u\in C([0,T]\times \mR^d)$, $\partial_t u, Du, \mL u\in C((0,T)\times \mR^d)$ and the equation and the initial condition are satisfied pointwise.
 \end{definition}
For linear (possibly nonlocal) PDEs in non-divergence form, Schauder regularity theory loosely speaking states that if data and coefficients are $\beta$-H\"older continuous, then solutions are $\alpha+\beta$ H\"older continuous where $\alpha$ is the order of the equation \cite{MR0473443,MR0241822,MR1465184,MR1246036}. In this paper we will give two results of this type for the nonlinear equation \eqref{eq:HJ}. As opposed to many other papers on regularity theory, our method of proof also produces well-posedness of solutions with this regularity.
%
%
 In Section~\ref{sec:regular} we give uniform in time Schauder estimates for smooth initial conditions and Lipschitz $f$, and in Section~\ref{sec:blowup} we have Schauder estimates for initial data and $f$ with low H\"older regularity where gradients will blow-up for small times. For the first type of results, to handle locally Lipschitz Hamiltonians in the long time existence proof, we need uniform Lipschitz estimates on the solutions. Such estimates along with a comparison principle are proved in Section~\ref{sec:Lip}.

 The viscous HJ equations arise in the optimal stochastic control problems or differential games. The unknown function $u$ then represents the (upper or lower) value function  for the optimally controlled process. Equation \eqref{eq:HJ} corresponds to the case of controlled drift but not diffusion. When $H$ is convex in $Du$, it corresponds to the control problem 
 \begin{align*}
     u(T-t,x)=\inf_{\alpha_s\in\mathcal A} E\Big(\int_t^T\big[L(X^{t,x}_s,\alpha_s,s)+f(s,X^{t,x}_s)\big]\,ds+ u_0(X_T^{t,x})\Big), 
 \end{align*}
 where $L$ and $f$ are running costs, $u_0$ a terminal cost, $\mathcal A$ is the set of admissible controls, and $X_s=X_s^{t,x}$ is the solution of an SDE driven by a L\'evy process $N_s$ with generator $\mathcal L$,
$$dX_s=  b(\alpha_s,X_s)\,ds + dN_s,\quad s>t; \qquad X_t=x.$$
 We refer e.g. to Fleming and Soner \cite[Chapter~IV.3]{MR2179357} for more details in the case of Gaussian noise. For non-Gaussian (L\'evy) noise see Øksendal and Sulem \cite{MR3931325}, and for differential games see Carmona \cite{MR3629171}.
 
 Early contributions to viscous HJ equations were given by Kru\v{z}kov \cite{MR0404870} and Crandall and Lions \cite{MR690039}.
Nowadays they have an extensive literature and are still an active area of research. With hundreds of papers, we sample a few to give an idea of some of activity which encompasses well-posedness, regularity, boundary value problems, long time/ergodic/asymptotic problems, singular solutions, approximation and numerics, homogenization, control and large deviations, and stochastic perturbations \cite{MR4000845,MR3285244,MR2732926,MR3575590,MR4264951,MR4333510,MR3095206,MR2664465,MR4044675}. 
The list is very far from being complete or even representative.  Our own main motivation comes from the theory of mean field games with individual noises. There the Nash equilibria are characterized by a forward-backward system consisting of a viscous HJ equation and a Fokker--Planck equation, see \cite{MR4214773,MR3967062,MR3752669} for the background and \cite{MR3934106,MR4309434} for mean field games with nonlocal diffusions.
 
 
 In equation 
 \eqref{eq:HJ} 
 the diffusion term dominates and induces a regularizing effect, which allows to solve the equation with the help of the heat kernel of $\mL$ and a Duhamel formula, yielding the so-called mild solutions. In the nonlocal case, this approach to the viscous HJ equation was used by Imbert \cite{MR2121115} and many other authors \cite{MR2471928,MR2259335,MR4309434,MR3623641,ERJAR,MR2373320}, whereas the local case goes back at least to Ben-Artzi \cite{MR1186789}. We note that for regular data the Duhamel formula for the viscous HJ equation works somewhat similar to the linear case, at least for the short-time results. In this connection we mention the work of Mikulevi\v{c}ius and Pragarauskas \cite{MR1246036} who study regularization properties for the heat equation with quite general L\'evy-type diffusions. More recently, Schauder estimates (without existence of classical solutions) were developed by Dong, Jin and Zhang \cite{MR3803717} for fully nonlinear equations involving nonlocal operators with the L\'evy measure comparable to the one of the fractional Laplacian. The results given below cover a large class of translation invariant diffusions which were not present in \cite{MR3803717}, including strongly anisotropic and nonsymmetric stable operators.

The long-time existence under \textit{local} regularity assumptions for $H$ is more delicate, as it requires global Lipschitz bounds for the solutions, which do not follow from the Duhamel formula. In general, these Lipschitz bounds can be obtained using the comparison principle and viscosity solution techniques, see Imbert \cite{MR2121115} and Jakobsen and Karlsen \cite{MR2129093}. If the solution is known to be more regular pointwise in time, one can also apply the Bernstein method, which is quite straightforward when one assumes that $f(t) \in W^{1,\infty}(\mR^d)$ uniformly in $t$. A more sophisticated Bernstein-type argument can be applied in order to get bounds for $u$ in terms of weaker norms of $f$, see e.g. Goffi \cite{Goffi}, but this is beyond the scope of this note.

In case that $u_0,f(t) \in C^\beta_b(\mR^d)$ for $t\in (0,T)$ with $\beta \in (0,1)$ we can still obtain classical solutions, but $Du(t)$ and $\mL u(t)$ can, in general, blow up for small $t$. There are many works concerning the viscous HJ equations with irregular initial data, see e.g., Gilding, Guedda, and Kersner \cite{MR1998665}, Benachour and Lauren\c{c}ot \cite{MR1720778} for the local diffusions and \cite{MR4309434,MR2121115,ERJAR,MR2737806} for the nonlocal case. However, we are not aware of papers which explicitly study blow up rates for $u_0 \in C^\beta_b(\mR^d)$ data in the vein of Theorem~\ref{th:blowup} below, obtained by the authors together with Amund Bergset, cf. his master thesis \cite{Amund}. Admittedly, we only allow Hamiltonians with global Lipschitz regularity there so far; an extension to locally Lipschitz Hamiltonians seems non-trivial and interesting.
In a future work we aim to study mean field game systems with H\"older payoff functions. The blowup for the viscous HJ equation means that the drift term in the corresponding Fokker--Planck equation may become large close to the terminal time, which poses a difficulty for compactness arguments.

 \section{Assumptions}\label{sec:assume}
We adopt rather standard assumptions on $H$ and $f$, which we will use in Sections~\ref{sec:Lip} and \ref{sec:regular}. We note that by assuming regularity for higher order derivatives in \ref{eq:H} and \ref{eq:F0} below, one could get higher regularity of solutions to the viscous HJ equation. Here, for the sake of clarity, we work with quite weak assumptions, but strong enough to allow for the classical solution setting.
\begin{description}
\item[(H0)\label{eq:H0}]$H\colon [0,\infty)\times \mR^d\times\mR\times \mR^d\to \mR$ is continuous and for every $R$ there exists $C_R$ such that for every $t, |u|\leq R$, $x,y\in \mR^d$ and $p,q \in B_R$,
\begin{align*}
    |H(t,x,u,p) - H(t,y,u,q)| \leq C_R(|x-y| + |p-q|).
\end{align*}
    \item[(H1)\label{eq:H}]$H$ is smooth and for all $l\in \mathbb{N}^{2d+1}$ with $|l|\leq 2$, $\sup\limits_{x\in \mR^d}|D^l_{x,u,p}H(\cdot,x,\cdot,\cdot)|$ is locally bounded.
\item[(H2)\label{eq:H2}] For every $R>0$ there exists $C_R>0$ such that for $x,y\in \mR^d$, $t,|u|\leq R$, and $p\in \mR^d$,
$$|H(t,x,u,p) - H(t,y,u,p)|\leq C_R(1+|p|)|x-y|.$$
\item[(H3)\label{eq:H3}] For every $T>0$ there exists $\gamma = \gamma(T)\in \mR$ such that for all $t\in [0,T]$, $x\in \mR^d$, $u\leq v$, and $p\in\mR^d$,
$$H(t,x,v,p) - H(t,x,u,p) \geq \gamma (v-u).$$
\item[(F0)\label{eq:F0}] $f\in C_b([0,T]\times \mR^d)$ and for every $T>0$ there exists $C$ such that for $t\in [0,T]$ and $x,y\in \mR^d$
\begin{align*}
    |f(t,x) - f(t,y)|\leq C|x-y|.
\end{align*}
\end{description}
Note that \ref{eq:H} is strictly stronger than \ref{eq:H0}.

In the Schauder estimates we use the fact that the order of the operator is greater than 1. We express that through appropriate estimates for the heat kernel. If we let $\psi$ be the Fourier multiplier corresponding to $\mL$ (see e.g. \cite{MR3587832}), then the heat kernel of $\mL$ is defined as 
\begin{align*}
    K_0 = \delta_0,\quad \mathcal{F}(K_t)(\xi) = e^{-t\psi(\xi)},\quad t>0.
\end{align*}
\begin{description}
\item[(K)\label{eq:K}]
        There is $\mathcal K > 0$ and $\alow\in (1,2]$, such that the heat kernels $K$ and $K^*$ of $\mL$ and $\mL^*$ respectively are smooth densities of probability measures, and for $\tilde K = K,K^*$ and $\beta \in \mathbb{N}^d$ we have
        $$\|D^{\beta}\tilde K(t,\cdot)\|_{L^1(\mR^d)} \leq \mathcal Kt^{-\frac {\beta}{\alow}}.$$   
\end{description}
Under \ref{eq:K} we have $K_t\in C_0^\infty(\mR^d)$ for every $t>0$ and $K_t$ satisfies $(\partial_t - \mL)K_t = 0$ pointwise on $(0,\infty)\times \mR^d$.
Here are some examples of operators satisfying \ref{eq:K} (taken mostly from \cite[Section~4]{MR4309434}): 
\begin{enumerate}
    \item[(i)] There is $c>0$ such $\langle Ax,x\rangle \geq c|x|^2$ for $x\in \mR^d$, e.g. $\mL = \Delta$. Then \ref{eq:K} is satisfied with $\alow=2$.\smallskip
    \item[(ii)] The L\'evy measure $\nu$ has an absolutely continuous part with density 
    $\nu_{ac}(z) \approx |z|^{-d-\alow}$ for all $|z|\leq 1$ and $\alow\in (1,2)$, see \cite[Theorem~4.3]{MR4309434}, based on the pointwise estimates in \cite{MR4308627}.\smallskip
    \item[(iii)] $\mL  = -(-\partial^2_{x_1})^{\alpha_1/2}-(-\partial^2_{x_2})^{\alpha_2/2} -\ldots -(-\partial^2_{x_d})^{\alpha_d/2}$, where $\alpha_i\in (1,2]$. Then $\alow = \min_i \alpha_i$.\smallskip
    \item[(iv)] The family of Riesz--Feller operators on $\mR$, including the spectrally one-sided model with $A=0$ and $\nu(z) = |z|^{-1-\alow}\textbf{1}_{(0,\infty)}(z)$, see \cite[Lemma~2.1 (G7) and Proposition~2.3]{MR3360395}.\smallskip
    \item[(v)] The generator of the CGMY model in finance, see \cite[Example~4.4]{MR4309434}.\smallskip
    \item[(vi)] If $\mL$ satisfies \ref{eq:K} and $L$ is any other L\'evy operator, then $\mL + L$ satisfies \ref{eq:K}.   
\end{enumerate}

\section{Comparison principle and Lipschitz estimates}\label{sec:Lip}
In the next sections we will construct short-time solutions via fixed point arguments and we will get rather strong regularity estimates, but only on short time intervals. For the sake of obtaining long-time solutions we will establish a global in time Lipschitz bound using different methods in Theorem~\ref{th:Lip} below.

We first give a comparison principle in order to obtain supremum bounds. A similar result was given by Imbert \cite[Theorem~2]{MR2121115} for purely nonlocal $\mL$ in the (more general) viscosity solution setting; it extends rather easily to general L\'evy operators.
\begin{theorem}\label{th:max}
    Assume that \ref{eq:H0}, \ref{eq:H2}, \ref{eq:H3}, and \ref{eq:F0} hold. Let $u_0$ be uniformly continuous and bounded and suppose that $u$ and $v$ are respectively classical sub- and supersolution to \eqref{eq:HJ}, i.e., $u(0,x)\leq u_0(x)\leq v(0,x)$ for $x\in \mR^d$ and
    \begin{align*}
     &\partial_t u - \mL u +H(t,x,u,Du) \leq f(t,x)\quad {\rm in}\ (0,T)\times \mR^d,\\
    &\partial_t v - \mL v + H(t,x,v,Dv) \geq f(t,x)\quad {\rm in}\ (0,T)\times \mR^d.
 \end{align*}
 Then $u\leq v$ on $[0,T)\times \mR^d$.
\end{theorem}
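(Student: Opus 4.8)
The plan is to argue by contradiction via the maximum principle, combining an exponential-in-time weight (to exploit \ref{eq:H3}), a doubling of the spatial variable (to exploit \ref{eq:H2} without assuming a priori gradient bounds), and penalization in space and time (to force an interior maximum on the unbounded domain). I would first reduce to the case $\gamma>0$: setting $\tilde u=e^{-\lambda t}u$, $\tilde v=e^{-\lambda t}v$ turns \eqref{eq:HJ} into an equation with Hamiltonian $\tilde H(t,x,r,p)=\lambda r+e^{-\lambda t}H(t,x,e^{\lambda t}r,e^{\lambda t}p)$, which still obeys \ref{eq:H0} and \ref{eq:H2} and satisfies \ref{eq:H3} with constant $\lambda+\gamma$; choosing $\lambda>-\gamma$ we may assume the monotonicity constant is strictly positive, and I keep writing $u,v,H,\gamma$ with $\gamma>0$. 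Assume, for contradiction, that $M_0:=\sup_{[0,T)\times\mR^d}(u-v)>0$ (I assume $u,v$ bounded, which holds for the solutions of interest). For small $\delta,\eta,\eps>0$ and $\chi(x)=\sqrt{1+|x|^2}$ consider
\[
\Psi(t,x,y)=u(t,x)-v(t,y)-\frac{|x-y|^2}{2\delta}-\eta\bigl(\chi(x)+\chi(y)\bigr)-\frac{\eps}{T-t}.
\]
Since $u,v$ are bounded and continuous, $\Psi\to-\infty$ as $|x|+|y|\to\infty$ or $t\to T^-$, so $\Psi$ attains a maximum at some $(t_0,x_0,y_0)$; the bound $u(0,\cdot)\le u_0\le v(0,\cdot)$ forces $t_0>0$ once the parameters are small enough that $\max\Psi>0$, and the standard doubling lemma gives $|x_0-y_0|\to0$ and $|x_0-y_0|^2/\delta\to0$ as $\delta\to0$.

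At the maximum I would read off $\partial_t u(t_0,x_0)-\partial_t v(t_0,y_0)=\eps/(T-t_0)^2>0$; the gradients $p_x:=Du(t_0,x_0)=\tfrac{x_0-y_0}{\delta}+\eta D\chi(x_0)$ and $p_y:=Dv(t_0,y_0)=\tfrac{x_0-y_0}{\delta}-\eta D\chi(y_0)$, so $|p_x-p_y|=O(\eta)$ and $|p_x|\le|x_0-y_0|/\delta+O(\eta)$; and, crucially, the maximum-point inequality $\mL u(t_0,x_0)-\mL v(t_0,y_0)\le C\eta$. This last bound is the heart of the matter: testing $\Psi$ along the common shift $(x_0+z,y_0+z)$ gives, for every $z$,
\[
[u(t_0,x_0+z)-u(t_0,x_0)]-[v(t_0,y_0+z)-v(t_0,y_0)]\le\eta\bigl[(\chi(x_0+z)-\chi(x_0))+(\chi(y_0+z)-\chi(y_0))\bigr],
\]
the quadratic penalization dropping out; combining this with the gradient relations so that the first-order compensators $-(p_x-p_y)\cdot z\,\mathbf{1}_{B(0,1)}$ recombine into the compensated increments of $\chi$, the nonlocal parts are bounded by $\eta(\mL\chi(x_0)+\mL\chi(y_0))$, while the drift and, when $A\neq0$, the second-order terms are controlled by $O(\eta)$ through $B\cdot(p_x-p_y)$ and the joint Hessian inequality $D^2\Psi\le0$ tested on diagonal directions $(\xi,\xi)$ with $A=\sum_k a_k a_k^{\top}$.

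Subtracting the sub- and supersolution inequalities at $(t_0,x_0)$ and $(t_0,y_0)$ and inserting these facts, I split $H(t_0,x_0,u(t_0,x_0),p_x)-H(t_0,y_0,v(t_0,y_0),p_y)$ into a monotonicity part, bounded below by $\gamma(u(t_0,x_0)-v(t_0,y_0))$ via \ref{eq:H3} (using $u(t_0,x_0)>v(t_0,y_0)$); an $x$-part bounded by $C_R(1+|p_x|)|x_0-y_0|$ via \ref{eq:H2}; and a $p$-part bounded by $C_R|p_x-p_y|=O(\eta)$ via \ref{eq:H0}. With $|f(t_0,x_0)-f(t_0,y_0)|\le C|x_0-y_0|$ from \ref{eq:F0}, this yields
\[
\gamma\bigl(u(t_0,x_0)-v(t_0,y_0)\bigr)\le C_R(1+|p_x|)|x_0-y_0|+C|x_0-y_0|+C'\eta.
\]
Since $u(t_0,x_0)-v(t_0,y_0)\ge\max\Psi\ge c_0:=M_0/4>0$ uniformly for small parameters, and $(1+|p_x|)|x_0-y_0|\le|x_0-y_0|+|x_0-y_0|^2/\delta+O(\eta)\to0$ as $\delta\to0$, letting first $\delta\to0$ and then $\eta,\eps\to0$ gives $\gamma c_0\le0$, contradicting $\gamma,c_0>0$. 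Hence $M_0\le0$, i.e.\ $u\le v$.

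The step I expect to be the main obstacle is the nonlocal maximum-point estimate. Unlike the local case one cannot simply discard the integral term, and because the order exceeds $1$ the compensator $z\,\mathbf{1}_{B(0,1)}$ is generally not $\nu$-integrable, so the correction arising from the gradient mismatch $p_x-p_y$ cannot be estimated on its own: it must be recombined with the increments of the penalization $\chi$ before integrating against $\nu$. Making this recombination rigorous, together with the standard but essential estimate $|x_0-y_0|^2/\delta\to0$ that tames the $|p|$-growth permitted by \ref{eq:H2}, is where the real work lies; the remaining steps are routine.
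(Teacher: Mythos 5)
Your overall strategy coincides with the paper's: the paper's proof simply defers to Imbert's comparison argument, observing that the only point requiring adaptation to general L\'evy operators is the maximum-point estimate for the diffusion term, obtained by testing along common shifts $(x_0+z,y_0+z)$ so that the quadratic penalization cancels and the compensator recombines with the compensated increments of the penalization function. Your inequality bounding the nonlocal parts by $\eta(\mL\chi(x_0)+\mL\chi(y_0))$ is exactly the paper's counterpart of Imbert's inequality $\mL V(\overline{s},\overline{y})-\mL U(\overline{t},\overline{x})\leq-\alpha\mL\varphi(\overline{x})$, and you correctly identify and resolve the crux, namely that the gradient-mismatch correction cannot be integrated on its own when the order exceeds $1$.

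There is, however, one step that fails as written: the order of the limits. You bound the $p$-increment of the Hamiltonian by $C_R|p_x-p_y|=O(\eta)$ via \ref{eq:H0}, but \ref{eq:H0} is only a \emph{local} Lipschitz bound, valid for $p,q\in B_R$; here $R$ must dominate $|p_x|\leq|x_0-y_0|/\delta+O(\eta)$, which is of order $\delta^{-1/2}$ (since $|x_0-y_0|^2/\delta\leq 2\|u-v\|_\infty$) and blows up as $\delta\to0$. Consequently the constant $C'$ in your final display depends on $\delta$, and ``letting first $\delta\to0$ and then $\eta\to0$'' does not make that term vanish. The standard repair, implicit in Imbert's proof, is to send $\eta\to0$ first for fixed $\delta$ (the gradients at the maximum point are then bounded uniformly in $\eta$, so $C_{R(\delta)}\,\eta\to0$ together with the $O(\eta)$ nonlocal, drift and Hessian terms), and only afterwards send $\delta\to0$ using the doubling lemma to kill $(1+|p_x|)|x_0-y_0|$; the resulting iterated limit needs a small additional argument but is routine. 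Two minor points: boundedness of $u,v$ is not part of the theorem's hypotheses nor of the paper's definition of classical solution, so it should be stated explicitly (as in Imbert); and when $A\neq0$ the diagonal Hessian test requires $D^2u,D^2v$ to exist pointwise, which is implicit in $\mL u,\mL v$ being classically defined.
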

\begin{proof}
    Note that under present assumptions $f$ can be absorbed into $H$, so we can assume that $f=0$. Since the diffusion operator $\mL$ is linear and translation invariant, it suffices to follow the proof of Imbert. First of all, since the L\'evy measure $\nu$ is finite outside neighborhoods of 0, there exists a function $\varphi\in C^2(\mR^d)$ such that
    \begin{align*}
        \|D\varphi\|_{\infty} + \|D^2\varphi\|_{\infty} \leq C,\quad \lim\limits_{|x|\to\infty} \varphi(x) = \infty,\quad \mL\varphi\in C(\mR^d).
    \end{align*}
    Since the sub/supersolutions are classical, we do not need to introduce the sub- and superdifferentials after \cite[(14)]{MR2121115}, because the operator can be evaluated pointwise, and we have (in the language of \cite{MR2121115}) $DU(\overline{t},\overline{x}) - DV(\overline{s},\overline{y}) = \alpha D\varphi(\overline{x})$. Then it is easy to obtain the following counterpart of inequality \cite[(16)]{MR2121115}:
    \begin{align*}
        \mL V(\overline{s},\overline{y}) - \mL U(\overline{t},\overline{x}) \leq -\alpha \mL \varphi(\overline{x}).
    \end{align*}
    The rest of the proof does not depend on the form of the diffusion operator.
\end{proof}
\begin{corollary}\label{cor:supbound}
Assume that \ref{eq:H0}, \ref{eq:H2}, \ref{eq:H3}, and \ref{eq:F0} hold. If $u_0$ is uniformly continuous and bounded and $u$ is a classical solution to \eqref{eq:HJ}, then 
\begin{align}\label{eq:supbound}
    \|u(t)\|_{\infty} \leq e^{-\gamma t}\|u_0\|_{\infty}+(1-\tfrac 1\gamma e^{-\gamma t})(\|f\|_{\infty} + \|H(\cdot,\cdot,0,0)\|_{\infty}),\quad t\in[0,T).
\end{align}
\end{corollary}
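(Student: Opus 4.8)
The plan is to sandwich $u$ between two spatially constant functions of time and then invoke the comparison principle, Theorem~\ref{th:max}. The crucial observation is that $\mL$ annihilates constants: if $\phi=\phi(t)$ is independent of $x$, then $B\cdot D\phi=0$, $\Div(A D\phi)=0$, and the nonlocal integrand in \eqref{eq:operator} is $\phi-\phi-0\equiv 0$, so $\mL\phi\equiv 0$ and $D\phi\equiv 0$ on $(0,T)\times\mR^d$. Consequently, along such profiles the equation \eqref{eq:HJ} reduces to comparing $\phi'(t)+H(t,x,\phi(t),0)$ with $f(t,x)$, and the whole task becomes choosing $\phi$ so that this expression dominates (resp.\ is dominated by) $f$.

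To build a supersolution I would set $M:=\|f\|_\infty+\|H(\cdot,\cdot,0,0)\|_\infty$ and let $\phi$ solve the linear ODE $\phi'(t)+\gamma\phi(t)=M$ with $\phi(0)=\|u_0\|_\infty$, where $\gamma=\gamma(T)$ is the constant from \ref{eq:H3}. Since $\|u_0\|_\infty\ge 0$ and $M\ge 0$, a short computation shows $\phi(t)\ge 0$ for every $t$, regardless of the sign of $\gamma$. Then \ref{eq:H3}, applied with $v=\phi(t)$ and $u=0$, gives $H(t,x,\phi,0)\ge H(t,x,0,0)+\gamma\phi(t)\ge -\|H(\cdot,\cdot,0,0)\|_\infty+\gamma\phi(t)$, whence
\begin{equation*}
\phi'(t)+H(t,x,\phi,0)\ \ge\ M-\|H(\cdot,\cdot,0,0)\|_\infty\ =\ \|f\|_\infty\ \ge\ f(t,x).
\end{equation*}
Thus $\phi$ is a classical supersolution with $\phi(0)\ge u_0$. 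A symmetric computation, using \ref{eq:H3} with $v=0$ and $u=-\phi(t)\le 0$, shows that $-\phi$ is a classical subsolution with $-\phi(0)\le u_0$.

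Applying Theorem~\ref{th:max} to the pair $(u,\phi)$ and then to $(-\phi,u)$ yields $-\phi(t)\le u(t,x)\le \phi(t)$ on $[0,T)\times\mR^d$, so $\|u(t)\|_\infty\le\phi(t)$; solving the linear ODE explicitly produces the right-hand side of \eqref{eq:supbound}. The constant-in-space profiles trivially satisfy the regularity required of classical sub/supersolutions, being $C^1$ in $t$ and constant in $x$, and the only point needing care is the sign condition $\phi\ge 0$ that makes \ref{eq:H3} applicable, which I verified above. I therefore expect no genuine obstacle: the entire substance of the estimate is already encapsulated in the comparison principle, and what remains is the explicit construction and solution of the comparison ODE.
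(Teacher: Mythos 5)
Your proposal is correct and is essentially the paper's own proof: the authors likewise take the (spatially constant) right-hand side $w(t)$ as a supersolution and $-w(t)$ as a subsolution and invoke Theorem~\ref{th:max}, with \ref{eq:H3} supplying the needed monotonicity for the sign-controlled profiles. One remark: your ODE $\phi'+\gamma\phi=M$, $\phi(0)=\|u_0\|_\infty$, has solution $e^{-\gamma t}\|u_0\|_\infty+\tfrac 1\gamma(1-e^{-\gamma t})M$ rather than the displayed $e^{-\gamma t}\|u_0\|_\infty+(1-\tfrac 1\gamma e^{-\gamma t})M$, which does not reduce to $\|u_0\|_\infty$ at $t=0$; this points to a typo in \eqref{eq:supbound} rather than to a flaw in your argument, which proves the corrected bound.
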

\begin{proof}
    Let $w(t)$ denote the right hand side of \eqref{eq:supbound}. Then the proof follows by comparison since by \ref{eq:H3} $w$ and $-w$ are super- and subsolutions of \eqref{eq:HJ}. 
\end{proof}
\begin{theorem}\label{th:Lip}
Assume that \ref{eq:H0}, \ref{eq:H2}, \ref{eq:H3}, and \ref{eq:F0} hold, $u_0\in C^1_b(\mR^d)$ and let $u$ be a classical solution to \eqref{eq:HJ}. Then
\begin{align*}
\|Du\|_{\infty} \leq C(T,H,f)(1 + \|Du_0\|_{\infty}),\quad t\in [0,T).
\end{align*}
\end{theorem}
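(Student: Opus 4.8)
The plan is to establish the bound by a doubling-of-variables argument resting on the comparison machinery behind Theorem~\ref{th:max}, exploiting crucially that $\mL$ is translation invariant. Since $u$ is a classical solution we may evaluate $\partial_t u$, $Du$ and $\mL u$ pointwise, so no viscosity test functions are needed; only the existence of a maximizer and the sign of $\mL$ at it will enter. First I would record the uniform sup bound $\|u(t)\|_\infty \le R_0 := R_0(T,H,f,\|u_0\|_\infty)$ from Corollary~\ref{cor:supbound}, which fixes the constant $C_{R_0}$ from \ref{eq:H2}, the monotonicity constant $\gamma$ from \ref{eq:H3}, and the Lipschitz constant $C_f$ of $f$ from \ref{eq:F0}; these are the only structural quantities that enter.

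The core object is, for small $\eta,\sigma>0$,
\[
\Phi(t,x,y) = u(t,x) - u(t,y) - L(t)|x-y| - \eta\chi(x) - \eta\chi(y) - \tfrac{\sigma}{T-t},
\]
where $\chi$ is a smooth coercive weight with bounded derivatives (so that $\mL\chi$ is bounded). The weight breaks the invariance of $u(t,x)-u(t,y)-L(t)|x-y|$ under center shifts $(x,y)\mapsto(x+w,y+w)$ and thereby forces the supremum to be attained at some $(\bar t,\bar x,\bar y)$, while $L(t)$ will be chosen as the solution of a linear ODE with $L(0)=\|Du_0\|_\infty$. I would argue by contradiction: if the claimed inequality fails then, for suitable parameters, $\Phi$ has a positive maximum with $\bar x \ne \bar y$. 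At such a point the first-order conditions give $Du(\bar t,\bar x) = L\hat e + O(\eta)$ and $Du(\bar t,\bar y) = L\hat e + O(\eta)$ with $\hat e = (\bar x-\bar y)/|\bar x-\bar y|$; the decisive feature is that the two momenta coincide up to $O(\eta)$, so $H$ gets compared at one and the same $p=L\hat e$, and no Lipschitz-in-$p$ control is required.

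The key step is signing the diffusion. Because the penalization depends on $(x,y)$ only through $x-y$, testing maximality of $\Phi$ against the matched shift $(x,y)\mapsto(\bar x+z,\bar y+z)$ shows that the integrand of the nonlocal part of $\mL u(\bar x)-\mL u(\bar y)$ is pointwise $\le 0$ up to the $O(\eta)$ contribution of $\chi$, with no tail condition on $\nu$ beyond the standing one; the drift parts cancel since the momenta agree, and the diagonal second-order condition gives $\Tr(AD^2u(\bar x))\le\Tr(AD^2u(\bar y))+O(\eta)$. Hence $\mL u(\bar t,\bar x)-\mL u(\bar t,\bar y)\le C\eta$. Subtracting the two pointwise equations, using \ref{eq:H2} to bound the $x$-increment of $H$ by $C_{R_0}(1+|p|)|\bar x-\bar y| = C_{R_0}(1+L)|\bar x-\bar y|$, \ref{eq:H3} to bound the $u$-increment from below by $\gamma\,(u(\bar x)-u(\bar y)) = \gamma L|\bar x-\bar y|$, and \ref{eq:F0} for the $f$-increment, together with the parabolic condition $\partial_t u(\bar x)-\partial_t u(\bar y)\ge L'(\bar t)|\bar x-\bar y|$, and dividing by $|\bar x-\bar y|>0$, one is led (as $\eta,\sigma\to0$) to
\[
L'(t) \le (C_{R_0}-\gamma)L(t) + (C_{R_0}+C_f).
\]
Choosing $L$ to solve the corresponding ODE and invoking Grönwall yields $L(t)\le e^{|C_{R_0}-\gamma|T}\big(\|Du_0\|_\infty + C(T,H,f)\big)$, which gives the stated estimate after letting $|x-y|\to0$.

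I expect the main obstacle to be the rigorous bookkeeping of the localization and the time variable: guaranteeing that the supremum of $\Phi$ is attained, that the errors generated by $\chi$ in both the gradient and $\mL$ vanish as $\eta\to0$, and that the informal ``maximum principle in $t$'' is made precise (via the $\sigma/(T-t)$ barrier and a first-violation-time argument) so as to produce the clean ODE comparison for $L$. By contrast, the step that is usually delicate for nonlocal operators — estimating $\mL$ of the penalization — is here painless, precisely because translation invariance of $\mL$ makes the matched-shift competitor available and the nonlocal term therefore carries a favorable sign automatically.
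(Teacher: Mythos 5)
Your proof takes a genuinely different route from the paper's. The paper proves Theorem~\ref{th:Lip} by the Bernstein method: it differentiates the equation in $x_i$, forms $w=\tfrac12|Du|^2$, uses that the carr\'e du champ $\Gamma(u_{x_i})=\tfrac 12(\mL u_{x_i}^2-2u_{x_i}\mL u_{x_i})\ge 0$, and compares $w$ with the explicit supersolution $e^{Ct}(1+\|w(0)\|_{\infty})-1$ of a linear problem. That argument is short, but it is carried out only for regular data and solutions (\ref{eq:H}, $f\in C([0,T],C^2_b(\mR^d))$, $u\in C([0,T],C^3_b(\mR^d))$); for the theorem as stated the paper explicitly defers to Imbert's viscosity-solution proof. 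Your doubling-of-variables scheme is essentially that deferred argument adapted to classical solutions: it needs only \ref{eq:H0}, \ref{eq:H2}, \ref{eq:H3}, \ref{eq:F0}, requires no differentiation of the equation, and your matched-shift trick for signing the nonlocal term is exactly the translation-invariance observation already exploited in the paper's proof of Theorem~\ref{th:max}.

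Two steps need repair, however. The concrete one is the zeroth-order term: from \ref{eq:H3} you get $-\bigl(H(\bar t,\bar y,u(\bar x),p)-H(\bar t,\bar y,u(\bar y),p)\bigr)\le -\gamma\,(u(\bar x)-u(\bar y))$, and at a positive maximum $u(\bar x)-u(\bar y)> L|\bar x-\bar y|$. When $\gamma\ge 0$ this indeed gives $\le -\gamma L|\bar x-\bar y|$, but \ref{eq:H3} allows $\gamma<0$, and then $-\gamma(u(\bar x)-u(\bar y))\ge -\gamma L|\bar x-\bar y|$ --- the inequality points the wrong way, and after dividing by $|\bar x-\bar y|$ (which can be arbitrarily small) the term is not controlled by $|\gamma|\,L$. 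You should first normalize, e.g. pass to $v=e^{(\gamma\wedge 0)t}u$ so that the transformed Hamiltonian is nondecreasing in $u$, and only then run the argument; note that the paper's Bernstein proof has no such issue because it simply absorbs $-2\gamma w$ into $C(1+w)$. Two softer points: the diagonal second-order condition requires $D^2u(\bar x)$ and $D^2u(\bar y)$ to exist pointwise, which the paper's definition of classical solution does not literally guarantee when $A\neq 0$ (one either assumes it or invokes the Jensen--Ishii machinery you wanted to avoid); and the localizer $\chi$ must be chosen coercive with $\int_{|z|>1}\sup_x|\chi(x+z)-\chi(x)|\,\nu(dz)<\infty$ (a de la Vall\'ee Poussin-type choice, as in the proof of Theorem~\ref{th:max}), since bounded $D\chi$ alone does not make $\eta\,\mL\chi(\bar x)$ an $O(\eta)$ error uniformly in the location of $\bar x$.
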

\begin{remark}
We only give the proof in the case of regular data and solutions to showcase the elegant and more accessible Bernstein method. For less regular data one can apply the viscosity solution proof of Imbert \cite{MR2121115}. In order to adapt the proof of Imbert to mixed local-nonlocal operators an appropriate  viscosity solution formulation is needed, see e.g., Jakobsen and Karlsen \cite{MR2129093} in this regard.
\end{remark}
\begin{proof}[Proof of Theorem~\ref{th:Lip} for regular data and solutions] 

If \ref{eq:H} is satisfied, $f\in C([0,T],C^2_b(\mR^d))$, and $u\in C([0,T],C^3_b(\mR^d))$, then we can differentiate the equation in $x_i$ and get
\begin{equation}\label{eq:diffHJ}
\begin{split}
    \partial_t u_{x_i} - \mL u_{x_i} + H_{x_i}(t,x,u,Du) &+ u_{x_i} H_u(t,x,u,Du)\\ &+ Du_{x_i} \cdot H_p(t,x,u,Du) = f_{x_i}.
\end{split}
\end{equation}
Now, let $w = \frac 12|Du|^2$. If we multiply \eqref{eq:diffHJ} by $u_{x_i}$ and sum over $i$ we get
\begin{align*}
    \partial_t w - \mL w + \sum\limits_i \Gamma(u_{x_i})  &+ Du \cdot H_x(t,x,u,Du) + 2w H_u(t,x,u,Du)\\ &+ Dw \cdot H_p(t,x,u,Du) = Df\cdot Du,
\end{align*}
where (recall that $A\geq 0$)
\begin{align*}
\Gamma(u) = \tfrac 12(\mL u^2 - 2u \mL u) = \tfrac 12\int_{\mR^d} (u(x+y) - u(x))^2\, \nu(dy) + Du \cdot A Du\geq 0.
\end{align*}
Furthermore, we have $w\geq 0$ and $H_u\geq \gamma$ by \ref{eq:H3}. Therefore, by \ref{eq:H2} and Corollary~\ref{cor:supbound},
\begin{align*}
    &\partial_t w - \mL w  +  Dw \cdot H_p(t,x,u,Du)\\ \leq&\,  -Du \cdot H_x(t,x,u,Du) -2\gamma w + \tfrac 12\|Df\|_{\infty}^2 + \tfrac 12 w\\
    \leq&\, C(T,H,f)(1 + w).
\end{align*}
Let $C = C(T,H,f)$ and note that $v = e^{Ct}(1+\|w(0)\|_{\infty}) -1$ satisfies
\begin{align*}
    \partial_t v - \mL v + Dv \cdot H_p(t,x,u,Du) = C(1+v),\quad v(0) = \|w(0)\|_{\infty} \geq w(0).
\end{align*} Therefore by the comparison principle for the linear parabolic problems we get that $w \leq v$, which ends the proof in this case.
\end{proof}
\section{Uniform Schauder estimates with regular initial data} \label{sec:regular}
In this section we give uniform in time Schauder estimates under the assumption that the initial data is regular. The result and proof were essentially given in \cite{ERJAR}. We repeat the arguments here in order to demonstrate the technique where the regularity of the heat kernel is used to gain $\alow-\varepsilon$ derivatives over the forcing term $f$. Similar results were given earlier by, e.g., Imbert \cite{MR2121115} and Ersland and Jakobsen \cite{MR4309434}, but with only one derivative gain. It seems that in order to gain full $\alpha$ regularity one needs a stronger assumption than \ref{eq:K}, see e.g. \cite{MR3803717} or \cite{MR4056997}.



In the proof we first show the existence of a short-time mild solution using the Duhamel formula and the Banach fixed point theorem. Then we show that the mild solution is classical, which enables us to use the results of the previous section to obtain the long time existence. Finally, we get the uniqueness from the comparison principle. 
\begin{theorem}\label{thm:main1}
    Let $\eps \in (0,\alpha-1)$ and assume that \ref{eq:H}, \ref{eq:H2}, \ref{eq:H3}, \ref{eq:F0}, and \ref{eq:K} are satisfied and let $u_0 \in C^{1+\alpha - \eps}_b(\mR^d)$. Then problem \eqref{eq:HJ} has a unique classical solution $u$ satisfying
\begin{align}\label{eq:ubounds}
&\|\partial_t u\|_{\infty} + \sup\limits_{t\in[0,T]} \|u(t,\cdot)\|_{C^{1+\alow - \eps}_b(\mR^d)}\\[0.1cm] &\leq \, c(T,\mathcal{K})\Big(c(H) + \sup\limits_{t\in[0,T]} \|f(t,\cdot)\|_{W^{1,\infty}(\mR^d)}+\|u_0\|_{C^{1+\alpha-\eps}_b(\mR^d)}\Big).\nonumber
\end{align}
\end{theorem}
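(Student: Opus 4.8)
The plan is to follow the route announced before the statement: build a short-time mild solution by a contraction argument based on the Duhamel formula, upgrade it to a classical solution, extend it to $[0,T]$ using the a priori bounds of Section~\ref{sec:Lip}, and deduce uniqueness from Theorem~\ref{th:max}. Writing $F[u](s,x) = f(s,x) - H(s,x,u(s,x),Du(s,x))$, a mild solution is a fixed point of
\[
\mathcal{T}[u](t) = K_t * u_0 + \int_0^t K_{t-s}*F[u](s)\,ds .
\]
I would set up the fixed point in the Banach space $X_\tau = C([0,\tau];C^{1+\alpha-\eps}_b(\mR^d))$ with the sup-in-time norm, on a closed ball of radius a fixed multiple of $\|u_0\|_{C^{1+\alpha-\eps}_b} + \sup_s\|f(s)\|_{W^{1,\infty}} + c(H)$. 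Since $u_0$ is regular there is no need for time-weights here: the seminorm $[Du(t)]_{\alpha-\eps}$ will stay bounded down to $t=0$, in contrast to Theorem~\ref{th:blowup}.

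The heart of the matter is the regularizing estimate for the Duhamel integral, where I would use \ref{eq:K} exactly as the introduction suggests. Because each $K_t$, $K_t^*$ is a probability density, convolution is a contraction on every Hölder seminorm, so $\|K_t * u_0\|_{C^{1+\alpha-\eps}_b} \le \|u_0\|_{C^{1+\alpha-\eps}_b}$. For a Hölder function $g$, putting the increment on $g$ and the derivative on the kernel gives
\[
[D(K_\tau * g)]_{\alpha-\eps} \le \|DK_\tau\|_{L^1}\,[g]_{\alpha-\eps} \le \mathcal{K}\,\tau^{-1/\alpha}[g]_{\alpha-\eps},
\qquad \|D(K_\tau*g)\|_\infty \le \mathcal{K}\tau^{-1/\alpha}\|g\|_\infty .
\]
Since $H$ is locally Lipschitz in $(x,u,p)$ by \ref{eq:H0} and Lipschitz in $x$ with linear growth in $p$ by \ref{eq:H2}, while $u(s)\in C^{1+\alpha-\eps}_b$, the composition $x\mapsto H(s,x,u(s,x),Du(s,x))$ lies in $C^{\alpha-\eps}_b$ with
\[
[F[u](s)]_{\alpha-\eps} \le [f(s)]_{\alpha-\eps} + C_R\big(1 + \|Du(s)\|_\infty + [u(s)]_{\alpha-\eps} + [Du(s)]_{\alpha-\eps}\big),
\]
where $R$ bounds $\|u(s)\|_\infty + \|Du(s)\|_\infty$ on the ball and $[f(s)]_{\alpha-\eps}\le C\|f(s)\|_{W^{1,\infty}}$. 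As $1/\alpha<1$ the kernel singularity is integrable, so
\[
[D\mathcal{T}[u](t)]_{\alpha-\eps} \le [Du_0]_{\alpha-\eps} + \mathcal{K}\!\int_0^t (t-s)^{-1/\alpha}[F[u](s)]_{\alpha-\eps}\,ds \le [Du_0]_{\alpha-\eps} + C\,t^{\frac{\alpha-1}{\alpha}}\sup_{s\le t}[F[u](s)]_{\alpha-\eps},
\]
and the lower-order norms are handled in the same way. The vanishing factor $t^{(\alpha-1)/\alpha}$ is what turns $\mathcal{T}$ into a self-map of the ball and, applied to $F[u]-F[v]$ (controlled by $\|u-v\|_{X_\tau}$ via the $C^2$ bounds \ref{eq:H} on $H$, which make the composition operator Lipschitz from $C^{1+\alpha-\eps}_b$ to $C^{\alpha-\eps}_b$), into a contraction for $\tau$ small depending only on $R$. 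The Banach fixed point theorem then gives a unique mild solution on $[0,\tau]$.

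Next I would show the mild solution is classical. Since $1+\alpha-\eps>\alpha$, it is regular enough for $\mL u$ to be defined pointwise and continuously: the local part needs at most $D^2u$, available when $A>0$ (then $\alpha=2$ and $1+\alpha-\eps>2$), while for the nonlocal part a first-order Taylor expansion bounds the integrand by $[Du(s)]_{\alpha-\eps}|z|^{1+\alpha-\eps}$ near the origin, which is $\nu$-integrable as $1+\alpha-\eps>\alpha$. Using $K_t\in C_0^\infty(\mR^d)$ and $(\partial_t-\mL)K_t=0$ one differentiates the Duhamel formula to identify $Du$ and $\mL u$, and $\partial_t u = \mL u - H(\cdot,\cdot,u,Du) + f$ then shows $\partial_t u$ continuous with $\|\partial_t u\|_\infty \lesssim \|u\|_{C^{1+\alpha-\eps}_b} + \|H\|_\infty + \|f\|_\infty$. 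With a genuine classical solution in hand, Corollary~\ref{cor:supbound} and Theorem~\ref{th:Lip} supply $L^\infty$ and Lipschitz bounds uniform on $[0,T]$, hence a uniform ceiling $R$ for the local Lipschitz constant of $H$; this lets me re-run the short-time construction with a step $\tau=\tau(R)$ independent of the starting time and patch finitely many intervals to reach $[0,T]$. Uniqueness is then immediate from Theorem~\ref{th:max}, since \ref{eq:H} implies \ref{eq:H0}.

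The remaining point, and the one I expect to be most delicate, is to convert the local-in-time seminorm estimate into the global bound \eqref{eq:ubounds} with $c(T,\mathcal{K})$ independent of the number of patching steps. Here I would feed the now-global $W^{1,\infty}$ bound back into the Duhamel estimate for $[Du(t)]_{\alpha-\eps}$, obtaining the weakly singular inequality
\[
[Du(t)]_{\alpha-\eps} \le C_0 + C\int_0^t (t-s)^{-1/\alpha}[Du(s)]_{\alpha-\eps}\,ds,
\qquad C_0 \lesssim c(H) + \sup_s\|f(s)\|_{W^{1,\infty}} + \|u_0\|_{C^{1+\alpha-\eps}_b},
\]
and closing it with the generalized (singular-kernel) Gronwall lemma; together with Corollary~\ref{cor:supbound}, Theorem~\ref{th:Lip}, and the bound on $\|\partial_t u\|_\infty$ this yields \eqref{eq:ubounds}. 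The genuinely subtle steps are the rigorous justification that the mild solution is classical — interchanging $\mL$ and $\partial_t$ with the Duhamel integral and taming the singularity at $s=t$ — and checking that the Gronwall constant depends only on $T$ and $\mathcal{K}$, not on the intermediate ball radii.
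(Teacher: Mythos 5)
Your overall architecture (Duhamel fixed point, upgrade to classical solutions, gluing via Theorem~\ref{th:Lip}, uniqueness from Theorem~\ref{th:max}) matches the paper's. But there is a genuine gap in the key regularization estimate, and it is exactly the step the theorem is about. Since $\eps\in(0,\alpha-1)$, the exponent $\alpha-\eps$ lies in $(1,\alpha)$, so $C^{1+\alpha-\eps}_b(\mR^d)$ contains two full derivatives and $[Du(t)]_{\alpha-\eps}$ must be read as $[D^2u(t)]_{\alpha-1-\eps}$. Your scheme puts one derivative on the kernel and the full H\"older increment of order $\alpha-\eps$ on the source $F[u](s)$, so you need $F[u](s)\in C^{\alpha-\eps}_b$ with $\alpha-\eps>1$; in particular you invoke $[f(s)]_{\alpha-\eps}\lesssim\|f(s)\|_{W^{1,\infty}}$, which is false for exponents above $1$ (a Lipschitz $f(s)$ need not even be differentiable, let alone have a H\"older continuous gradient). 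Assumption \ref{eq:F0} gives only $f(t)\in W^{1,\infty}(\mR^d)$, and the same problem appears for the Hamiltonian term, whose $C^{\alpha-\eps}$ seminorm would require H\"older control of $D^2u$ inside the source, i.e.\ the top norm of the space appearing on the right-hand side without a small prefactor. As written, your estimate gains only one derivative over the H\"older class of the source, which is precisely the ``one derivative gain'' the paper says is not sufficient here.

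The missing idea is to differentiate the source only once and put the $\sigma$-H\"older increment ($\sigma=\alpha-1-\eps$) on the \emph{differentiated kernel}: from \eqref{eq:dks} with $k=2$,
\[
[D_x^2 S(u)(t)]_{C^\sigma}\le [D^2u_0]_{C^\sigma}+\int_0^t\sup_{h\neq0}\frac{\|D_xK_{t-s}(\cdot+h)-D_xK_{t-s}(\cdot)\|_{L^1(\mR^d)}}{|h|^\sigma}\,\|D_x[H+f](s)\|_\infty\,ds,
\]
where the kernel factor is bounded by $\mathcal{K}(t-s)^{-(1+\sigma)/\alpha}$ by interpolating the $L^1$ bounds of \ref{eq:K} for $DK$ and $D^2K$; the singularity is integrable precisely because $(1+\sigma)/\alpha=(\alpha-\eps)/\alpha<1$, i.e.\ because $\eps>0$. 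This gains $1+\sigma=\alpha-\eps$ derivatives over $\|D_x(H+f)\|_\infty$ and only requires $f(s)\in W^{1,\infty}(\mR^d)$ and $u(s)\in C^2_b(\mR^d)$, which is what closes the fixed point in $C^{2+\sigma}_b(\mR^d)$. Two smaller remarks: the paper shows the mild solution is classical not by differentiating the Duhamel integral in $t$ (the delicate interchange you flag and leave open) but by writing Duhamel on $(t,t+h)$ and identifying $\lim_{h\to0^+}h^{-1}(K_h*u(t)-u(t))=\mL u(t)$ via the generator, which is legitimate since $u(t)\in C^2_b(\mR^d)$; and no singular Gr\"onwall argument is needed for \eqref{eq:ubounds}, since Theorem~\ref{th:Lip} gives a uniform ceiling $R$, hence a uniform step length and uniform constants in each fixed-point estimate.
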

\begin{proof}
    We denote $\sigma = \alpha - 1 - 
    \eps$, so that $1+\alpha - \eps = 2+\sigma$.
\smallskip

\noindent    \textbf{1) Short-time mild solutions by Banach's fixed point theorem.}
   Let 
  \begin{align*}
   R = R(u)= \sup_t \|u(t)\|_{C^1_b(\mR^d)}, \quad\!   R_1 = 
   2(\|u_0\|_{C^{2+\sigma}_b} + \sup_t\|f(t)\|_{W^{1,\infty}(\mR^d)}) + 1,
  \end{align*}
  and define the space
  \begin{align*}X = \{u\in C_b((0,T],C^{2+\sigma-\eps}_b(\mR^d)): \sup\limits_{t\in[0,T]}\|u(t,\cdot)\|_{C^{2+\sigma}_b(\mR^d)} \leq R_1\}.
  \end{align*}
  Note that for all $u\in X$ we have $R\leq R_1$.
  For $u\in X$, $t\in [0,T)$, $x\in \mR^d$ we define the fixed-point map:
  \begin{align*}
      S(u)(t,x) := K_t \ast u_0(x) + \int_0^t K_{t-s}\ast [H(s,\cdot,u,Du) + f(s)](x) \, ds.
  \end{align*}
  We will now estimate the $C^{2+\sigma}_b$ norms of $S(u)$ in order to see that it belongs to $X$. The time continuity of the $C^{2+\sigma-\eps}_b$ norms follows from the $C^{2+\sigma}_b$ estimates, see e.g. \cite[Lemma~B.2]{ERJAR}. 
  First note that for $k=1,2$ we have
  \begin{equation}\label{eq:dks}\begin{split}
      &D_x^k S(u)(t,x)\\ =\, &K_t \ast D_x^k u_0(x) + \int_0^t D_xK_{t-s}\ast D_x^{k-1}[H(s,\cdot,u,Du) + f(s)](x) \, ds.
      \end{split}
  \end{equation}
  By \ref{eq:K} and interpolation for H\"older functions (e.g. \cite[Remark~2.22 d)]{ERJAR}),
   
  \begin{align*}
  &\int_0^T\|D_xK_{t-s}\|_{L^1(\mR^d)}\, ds\leq \mathcal{K}\int_0^T(t-s)^{-\frac 1\alpha}\, ds=\frac{\alpha-1}{\alpha}\mathcal{K}T^{\frac{\alpha-1}{\alpha}}:=c_{T,\alpha},\\[0.3cm]
      &\int_0^T\frac{\|D_xK_{t-s}(\cdot+h)- D_xK_{t-s}(\cdot)\|_{L^1(\mR^d)}}{|h|^{\sigma}}\, ds \leq \mathcal{K}\!\int_0^T (t-s)^{-\frac{1+\sigma}{\alpha}}\, ds =: c_{T,\alpha,\sigma},
  \end{align*}
  for all $h\in \mR^d\setminus \{0\}$. Therefore by Young's inequality for convolutions and
   \ref{eq:H} we have for $k=0,1,2$ (for $k=0$ we use $\|K_{t-s}\|_{L^1(\mR^d)} = 1$): 
  \begin{align*}
      \|D^kSu(t)\|_{\infty} \leq&\,  \|D^ku_0\|_{\infty} + (T\vee c_{T,\alpha})\Big(\sup\limits_t\|f(t)\|_{W^{1,\infty}(\mR^d)}\\
      &\qquad \qquad\quad  + C(H,R)(1+\sup_t\|u(t,\cdot)\|_{C^{2}_b(\mR^d)}) \Big),\\[0.2cm]
      [D_x^2Su(t)]_{C^{\sigma}(\mR^d)} \leq& \,[D_x^2u_0]_{C^{\sigma}(\mR^d)} +c_{T,\alpha,\sigma}\Big(\sup_t\|f(t)\|_{W^{1,\infty}(\mR^d)}\\ &\qquad\qquad\quad + C(H,R)(1+\sup_t\|u(t,\cdot)\|_{C^{2}_b(\mR^d)})\Big).
  \end{align*}
  Since $c_{T,\alpha}$ and $c_{T,\alpha,\sigma}$ converge to 0 as $T\to 0^+$, by taking $T$ small enough we get $\sup_t\|Su(t)\|_{C^{2+\sigma}_b(\mR^d)} \leq R_1$. 
  Thus $S$ maps $X$ to $X$.
  
  Then it remains to show that $S$ is a contraction. The computations are similar to the above, we only inspect the highest order term to see that the bounds in \ref{eq:H} are sufficient: by \eqref{eq:dks} we have
  \begin{align*}
      &[D_x^2(S(u) - S(v))]_{C^{\sigma}(\mR^d)}\\ &\leq \mathcal{K}\int_0^t (t-s)^{-\frac{\sigma + 1}{\alpha}} \|D_x (H(s,\cdot,u,Du) - H(s,\cdot,v,Dv))\|_{\infty}\, ds\\ &\leq c_{T,\alpha,\sigma}
      C(H,R)\|u - v\|_{C^2_b(\mR^d)}.
  \end{align*}
  Again, by taking $T$ small enough we get that $S$ is a contraction on $X$. Therefore, by the Banach fixed point theorem $S$ has a unique fixed point on $X$, which we call the mild solution.
\medskip

\noindent \textbf{2) Mild solutions are classical solutions.}\  The quickest way to prove that is by using the Duhamel formula on $(t,t+h)$ with $h>0$:
  \begin{align*}
      u(t+h,x) = K_{h}\ast u(t)(x) + \int_t^{t+h} 
      K_{t+h-s} \ast[H(s,\cdot,u,Du) + f(s)](x)\, ds.
  \end{align*}
  Hence,
  \begin{align*}
      \frac{u(t+h,x) - u(t,x)}{h} =& \frac{K_{h}\ast u(t)(x)-u(t,x)}{h}\\ &+ \frac 1h\int_t^{t+h} K_{t+h-s} \ast[H(s,\cdot,u,Du) + f(s)](x)\, ds.
  \end{align*}
  As $h\to 0^+$, the first term on the right-hand side converges to the generator of the semigroup associated with $K_{t}$, but since $u\in C^2_b(\mR^d)$ this is equal to $\mL u(t,x)$, see e.g. Sato \cite[Theorem~31.5]{MR3185174}.\footnote{The result is given for $C^2_0(\mR^d)$, but an easy cut-off argument suffices.} The second term converges to $H(t,x,u,Du) + f(t,x)$ by the regularity of $f,$ $H$ and $u$, and the fact that $K_t\to \delta_0$ weakly as $t\to 0^+$. Since the right-hand side converges, the left-hand side converges too, and is equal to $\partial_t u$, which proves that $u$ is a classical solution.
\medskip

\noindent  \textbf{3) Long-time existence by gluing.}\  The idea for extending the solution in time consists in repeating the fixed-point procedure starting from a later point in time, e.g., $T/2$ and then iterating. However, we need to find appropriate lower bounds for lengths of the short-time existence intervals. To this end we use the fact that the mild solutions are classical and apply Theorem~\ref{th:Lip}. This gives us a locally uniform in time bound for the constant $R$ which determines the length of the existence interval, letting us extend the solution to an arbitrarily long time interval.
\medskip

\noindent  \textbf{4) Uniqueness} follows from the comparison principle in Theorem~\ref{th:max}.
\end{proof}

\section{Schauder estimates with irregular initial data}\label{sec:blowup}
 In an ongoing project with Amund Bergset  we study viscous HJ equations with initial data $u_0 \in C^\beta_b(\mR^d)$ for $\beta \in (0,1)$. 
 In this case $\|Du(t)\|_{\infty}$ blows up as $t\to 0^+$, and therefore we need more explicit and global control of growth/regularity of the Hamiltonian in $p$. For simplicity we consider the following global Lipschitz condition:
\begin{align}\label{eq:HLip}\tag{\textbf{HLip}}
    H = H(p) \ \textnormal{ and }\ H,D_pH\ \textnormal{\ are globally Lipschitz in}\ p.
\end{align}
It should be possible to consider e.g. power-type Hamiltonians, but the analysis would be more delicate in this case. For local diffusions with subquadratic Hamiltonians and measure initial data see \cite{MR1720778}. Superquadratic Hamiltonians were studied e.g. in \cite{MR4104825}.

Our result shows that the $\beta$-H\"older continuity of the data is preserved by the solution, while higher order H\"older norms blow up with a specific rate as $t\to 0$. There are two cases depending on whether $\alpha+\beta$ is greater or smaller than $2$.
\begin{theorem}\label{th:blowup}
Assume that $T>0$, \eqref{eq:HLip} holds, $f\in L^\infty([0,T],C^\beta_b(\mR^d))$, and $u_0\in C^\beta_b(\mR^d)$. Then \eqref{eq:HJ} has a unique classical solution $u$. Moreover for every small $\varepsilon>0$ there exists $C$ such that for all $t\in(0,T]$:
\begin{align*}
        &(i) \ \quad \|u(t)\|_{\infty} + [u(t)]_{C^\beta(\mR^d)} + t^{\frac {1-\beta}{\alow}} \|Du(t)\|_{\infty} \\[0.2cm] 
        &\hspace{2cm}+ t^{\frac{\alow - \varepsilon}{\alow}}[Du(t)]_{\alow+\beta-\varepsilon} \leq C,
         & \text{if $\alow+\beta \leq 2$,}\\[0.4cm]
        &(ii)\quad \|u(t)\|_{\infty} + [u(t)]_{C^\beta(\mR^d)} + t^{\frac {1-\beta}{\alow}} \|Du(t)\|_{\infty}\\[0.2cm] &\hspace{2cm}+ t^{\frac{2-\beta}{\alow}}\|D^2u(t)\|_{\infty} + t^{\frac{\alow - \varepsilon}{\alow}}[D^2u(t)]_{\alow+\beta-\varepsilon} \leq C, & \text{if $\alow+\beta > 2$.}
    \end{align*}

\end{theorem}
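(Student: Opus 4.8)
The plan is to repeat the Duhamel/Banach scheme of Theorem~\ref{thm:main1}, but in time-weighted H\"older spaces whose weights are exactly the blow-up rates in (i)--(ii). The engine is a family of smoothing estimates forced by \ref{eq:K}. Interpolating the two endpoint bounds $\|D^k(K_t\ast g)\|_\infty\le\mathcal K t^{-k/\alpha}\|g\|_\infty$ (from the $L^1$ bound on $D^kK_t$) and $\|D^k(K_t\ast g)\|_\infty=\|K_t\ast D^kg\|_\infty\le\|g\|_{C^k}$ (from $\|K_t\|_{L^1}=1$), via the interpolation property of H\"older spaces as in \cite{ERJAR}, I obtain
\[
\|D^k(K_t\ast g)\|_{C^\gamma}\le C\,t^{-\frac{k+\gamma-\beta}{\alpha}}\|g\|_{C^\beta},\qquad 0\le\beta\le k+\gamma,
\]
valid as long as no integer index is crossed. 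This is where the dichotomy $\alpha+\beta\le2$ vs. $\alpha+\beta>2$ and the $\varepsilon$-loss enter: the top index $\alpha+\beta-\varepsilon$ must be kept strictly off the integer $2$, which dictates whether the gain lands on $Du$ (case (i), $k=1$) or on $D^2u$ (case (ii), $k=2$). Applied to $g=u_0\in C^\beta_b(\mR^d)$ these bounds reproduce the exact powers $t^{-(1-\beta)/\alpha}$, $t^{-(2-\beta)/\alpha}$ and $t^{-(\alpha-\varepsilon)/\alpha}$ attached to the linear term $K_t\ast u_0$.

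Next I fix the solution space; in case (i),
\[
\|u\|_{X_T}=\sup_t\|u(t)\|_\infty+\sup_t[u(t)]_{C^\beta}+\sup_t t^{\frac{1-\beta}{\alpha}}\|Du(t)\|_\infty+\sup_t t^{\frac{\alpha-\varepsilon}{\alpha}}[Du(t)]_{\alpha+\beta-\varepsilon},
\]
with the analogous $D^2$-norm in case (ii), and study $S(u)(t)=K_t\ast u_0+\int_0^t K_{t-s}\ast[H(Du(s))+f(s)]\,ds$. Here \eqref{eq:HLip} is essential: linear growth $\|H(Du(s))\|_\infty\le|H(0)|+L\|Du(s)\|_\infty$ and the Lipschitz composition bound $[H(Du(s))]_{C^\gamma}\le L[Du(s)]_{C^\gamma}$ turn each weighted seminorm of the Duhamel term into a convolution against $D^kK_{t-s}$ paired with a negative power $s^{-b}$ of the solution weight, producing Beta integrals $\int_0^t(t-s)^{-a}s^{-b}\,ds=Ct^{1-a-b}$. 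Subcriticality $\alpha>1$ gives $a=(k+\gamma-\beta)/\alpha<1$, and $b<1$, so these converge; a direct check shows that after multiplication by the matching time weight a strictly positive power of $T$ survives (e.g. $t^{1-1/\alpha}$ for the $\|Du\|_\infty$ term and $t^{\varepsilon/\alpha}$ for the top seminorm). Hence for $T$ small $S$ is a self-map and, by the same estimate applied to $H(Du)-H(Dv)$, a contraction, and its fixed point is a mild solution obeying (i)/(ii).

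It then remains to upgrade regularity and globalize. For $t>0$ the smoothing estimates place $u(t)$ in $C^{1+s}_b(\mR^d)$ with $s=\alpha+\beta-1-\varepsilon>\alpha-1$, so $\mL u(t)$ is defined pointwise and the differentiation-in-time argument of Step~2 of Theorem~\ref{thm:main1} (identifying $\lim_{h\to0^+}h^{-1}(K_h\ast u(t)-u(t))$ with $\mL u(t)$) makes $u$ a classical solution on $(0,T]$; meanwhile the Duhamel formula gives $u\in C([0,T]\times\mR^d)$ with $u(0)=u_0$, since $K_t\ast u_0\to u_0$ uniformly and the integral term is $O(t^{1-(1-\beta)/\alpha})$. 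Because $H$ is \emph{globally} Lipschitz, the contraction constant depends only on $L$, not on the size of the data, so restarting from the now-regular datum $u(t_0)$, $t_0>0$, yields existence steps of uniform length that glue up to any prescribed $T$ (the sup-norm staying bounded by comparison, here with $\gamma=0$). Uniqueness in the class obeying the stated rates follows from the contraction itself: two such classical solutions are mild solutions with the same data, so $\|u-v\|_{X_{T'}}\le CT'^{\,\theta}\|u-v\|_{X_{T'}}$ on a short interval forces $u=v$, which then propagates.

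I expect the main obstacle to be the bookkeeping at the threshold $\alpha+\beta=2$: making the interpolation gain estimates rigorous without crossing the integer $2$ (which forces both the $\varepsilon$-loss and the split into cases) while simultaneously checking that in \emph{every} weighted seminorm the nonlinear term retains a strictly positive power of $T$. The most delicate piece is the top-order term, where $Du(s)$ itself blows up like $s^{-(\alpha-\varepsilon)/\alpha}$: one must balance this interior singularity against the kernel smoothing $(t-s)^{-(\alpha-\varepsilon)/\alpha}$, and it is precisely the strict positivity of $\varepsilon$ that keeps the resulting Beta integral convergent and leaves the surplus factor $t^{\varepsilon/\alpha}$.
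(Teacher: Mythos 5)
Your proposal follows essentially the same route the paper indicates for this theorem: the Duhamel/Banach fixed-point scheme of Theorem~\ref{thm:main1} transplanted to time-weighted H\"older spaces whose weights encode the stated blow-up rates, with the interpolation-based heat-kernel smoothing estimates from \ref{eq:K} and the global Lipschitz bound \eqref{eq:HLip} driving the Beta-integral bookkeeping --- precisely the strategy the paper only sketches, deferring details to \cite{Amund}. The one place you deviate is uniqueness: the paper's template obtains it from the comparison principle (Theorem~\ref{th:max}, applicable here with $\gamma=0$), which yields uniqueness among all classical solutions, whereas your contraction argument a priori gives uniqueness only within the weighted class satisfying the stated rates.
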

The proof uses a similar strategy as the proof of Theorem \ref{thm:main1} above, we refer to \cite{Amund} for details.
Compared to earlier results, this result cover a larger class of nonlocal operators, gives the behaviour of solution smoothness near $t=0$, and provide existence of solutions with the stated smoothness. Our ultimate goal is to use this result to study new classes of Mean Field Games.

\section*{Acknowledgements}
E. R. Jakobsen received funding from the Research Council of Norway under Grant Agreement No. 325114 “IMod. Partial differential equations, statistics and data: An interdisciplinary approach to data-based modelling”. While this project was carried out A. Rutkowski was an ERCIM Alain Bensoussan fellow at NTNU.


\bibliographystyle{abbrv}
\bibliography{bib-file}

\begin{thebibliography}{10}

\bibitem{MR4214773}
Y.~Achdou, P.~Cardaliaguet, F.~Delarue, A.~Porretta, and F.~Santambrogio.
\newblock {\em Mean field games}, volume 2281 of {\em Lecture Notes in
  Mathematics}.
\newblock Springer, Cham; Centro Internazionale Matematico Estivo (C.I.M.E.),
  Florence, [2020] \copyright 2020.
\newblock Edited by Pierre Cardaliaguet and Alessio Porretta, Fondazione
  CIME/CIME Foundation Subseries.

\bibitem{MR3360395}
F.~Achleitner and C.~Kuehn.
\newblock Traveling waves for a bistable equation with nonlocal diffusion.
\newblock {\em Adv. Differential Equations}, 20(9-10):887--936, 2015.

\bibitem{MR2471928}
N.~Alibaud and C.~Imbert.
\newblock Fractional semi-linear parabolic equations with unbounded data.
\newblock {\em Trans. Amer. Math. Soc.}, 361(5):2527--2566, 2009.

\bibitem{MR4000845}
A.~Arapostathis, A.~Biswas, and L.~Caffarelli.
\newblock On uniqueness of solutions to viscous {HJB} equations with a
  subquadratic nonlinearity in the gradient.
\newblock {\em Comm. Partial Differential Equations}, 44(12):1466--1480, 2019.

\bibitem{MR3285244}
S.~N. Armstrong and P.~Cardaliaguet.
\newblock Quantitative stochastic homogenization of viscous {H}amilton-{J}acobi
  equations.
\newblock {\em Comm. Partial Differential Equations}, 40(3):540--600, 2015.

\bibitem{MR2732926}
G.~Barles, A.~Porretta, and T.~T. Tchamba.
\newblock On the large time behavior of solutions of the {D}irichlet problem
  for subquadratic viscous {H}amilton-{J}acobi equations.
\newblock {\em J. Math. Pures Appl. (9)}, 94(5):497--519, 2010.

\bibitem{MR1186789}
M.~Ben-Artzi.
\newblock Global existence and decay for a nonlinear parabolic equation.
\newblock {\em Nonlinear Anal.}, 19(8):763--768, 1992.

\bibitem{MR1720778}
S.~Benachour and P.~Lauren\c{c}ot.
\newblock Global solutions to viscous {H}amilton-{J}acobi equations with
  irregular initial data.
\newblock {\em Comm. Partial Differential Equations}, 24(11-12):1999--2021,
  1999.

\bibitem{Amund}
A.~Bergset.
\newblock {\em Mean field games in H\"older spaces}.
\newblock NTNU (Trondheim), 2023.
\newblock Master thesis.

\bibitem{MR3967062}
P.~Cardaliaguet, F.~Delarue, J.-M. Lasry, and P.-L. Lions.
\newblock {\em The master equation and the convergence problem in mean field
  games}, volume 201 of {\em Annals of Mathematics Studies}.
\newblock Princeton University Press, Princeton, NJ, 2019.

\bibitem{MR3629171}
R.~Carmona.
\newblock {\em Lectures on {BSDE}s, stochastic control, and stochastic
  differential games with financial applications}, volume~1 of {\em Financial
  Mathematics}.
\newblock Society for Industrial and Applied Mathematics (SIAM), Philadelphia,
  PA, 2016.

\bibitem{MR3752669}
R.~Carmona and F.~Delarue.
\newblock {\em Probabilistic theory of mean field games with applications.
  {I}}, volume~83 of {\em Probability Theory and Stochastic Modelling}.
\newblock Springer, Cham, 2018.
\newblock Mean field FBSDEs, control, and games.

\bibitem{MR3575590}
E.~Chasseigne and N.~Ichihara.
\newblock Qualitative properties of generalized principal eigenvalues for
  superquadratic viscous {H}amilton-{J}acobi equations.
\newblock {\em NoDEA Nonlinear Differential Equations Appl.}, 23(6):Art. 66,
  17, 2016.

\bibitem{MR4056997}
P.-E. Chaudru~de Raynal, S.~Menozzi, and E.~Priola.
\newblock Schauder estimates for drifted fractional operators in the
  supercritical case.
\newblock {\em J. Funct. Anal.}, 278(8):108425, 57, 2020.

\bibitem{MR3934106}
M.~Cirant and A.~Goffi.
\newblock On the existence and uniqueness of solutions to time-dependent
  fractional {MFG}.
\newblock {\em SIAM J. Math. Anal.}, 51(2):913--954, 2019.

\bibitem{MR4104825}
M.~Cirant and A.~Goffi.
\newblock Lipschitz regularity for viscous {H}amilton-{J}acobi equations with
  {$L^p$} terms.
\newblock {\em Ann. Inst. H. Poincar\'{e} C Anal. Non Lin\'{e}aire},
  37(4):757--784, 2020.

\bibitem{MR4264951}
M.~Cirant and A.~Goffi.
\newblock On the problem of maximal {$L^q$}-regularity for viscous
  {H}amilton-{J}acobi equations.
\newblock {\em Arch. Ration. Mech. Anal.}, 240(3):1521--1534, 2021.

\bibitem{MR690039}
M.~G. Crandall and P.-L. Lions.
\newblock Viscosity solutions of {H}amilton-{J}acobi equations.
\newblock {\em Trans. Amer. Math. Soc.}, 277(1):1--42, 1983.

\bibitem{MR4333510}
A.~Davini, H.~Ishii, R.~Iturriaga, and H.~Sanchez~Morgado.
\newblock Discrete approximation of the viscous {HJ} equation.
\newblock {\em Stoch. Partial Differ. Equ. Anal. Comput.}, 9(4):1081--1104,
  2021.

\bibitem{MR3803717}
H.~Dong, T.~Jin, and H.~Zhang.
\newblock Dini and {S}chauder estimates for nonlocal fully nonlinear parabolic
  equations with drifts.
\newblock {\em Anal. PDE}, 11(6):1487--1534, 2018.

\bibitem{MR2259335}
J.~Droniou and C.~Imbert.
\newblock Fractal first-order partial differential equations.
\newblock {\em Arch. Ration. Mech. Anal.}, 182(2):299--331, 2006.

\bibitem{MR4309434}
O.~Ersland and E.~R. Jakobsen.
\newblock On fractional and nonlocal parabolic mean field games in the whole
  space.
\newblock {\em J. Differential Equations}, 301:428--470, 2021.

\bibitem{MR2179357}
W.~H. Fleming and H.~M. Soner.
\newblock {\em Controlled {M}arkov processes and viscosity solutions},
  volume~25 of {\em Stochastic Modelling and Applied Probability}.
\newblock Springer, New York, second edition, 2006.

\bibitem{MR0473443}
D.~Gilbarg and N.~S. Trudinger.
\newblock {\em Elliptic partial differential equations of second order}.
\newblock Grundlehren der Mathematischen Wissenschaften, Vol. 224.
  Springer-Verlag, Berlin-New York, 1977.

\bibitem{MR1998665}
B.~H. Gilding, M.~Guedda, and R.~Kersner.
\newblock The {C}auchy problem for {$u_t=\Delta u+|\nabla u|^q$}.
\newblock {\em J. Math. Anal. Appl.}, 284(2):733--755, 2003.

\bibitem{Goffi}
A.~Goffi.
\newblock A priori {L}ipschitz estimates for nonlinear equations with mixed
  local and nonlocal diffusion via the adjoint-{B}ernstein method.
\newblock {\em Boll. Unione Mat. Ital.}, 2022.

\bibitem{MR4308627}
T.~Grzywny and K.~Szczypkowski.
\newblock Estimates of heat kernels of non-symmetric {L}\'{e}vy processes.
\newblock {\em Forum Math.}, 33(5):1207--1236, 2021.

\bibitem{MR3095206}
N.~Ichihara.
\newblock Criticality of viscous {H}amilton-{J}acobi equations and stochastic
  ergodic control.
\newblock {\em J. Math. Pures Appl. (9)}, 100(3):368--390, 2013.

\bibitem{MR2121115}
C.~Imbert.
\newblock A non-local regularization of first order {H}amilton-{J}acobi
  equations.
\newblock {\em J. Differential Equations}, 211(1):218--246, 2005.

\bibitem{MR3623641}
T.~Iwabuchi and T.~Kawakami.
\newblock Existence of mild solutions for a {H}amilton-{J}acobi equation with
  critical fractional viscosity in the {B}esov spaces.
\newblock {\em J. Math. Pures Appl. (9)}, 107(4):464--489, 2017.

\bibitem{MR2129093}
E.~R. Jakobsen and K.~H. Karlsen.
\newblock Continuous dependence estimates for viscosity solutions of
  integro-{PDE}s.
\newblock {\em J. Differential Equations}, 212(2):278--318, 2005.

\bibitem{ERJAR}
E.~R. Jakobsen and A.~Rutkowski.
\newblock The master equation for mean field game systems with fractional and
  nonlocal diffusions.
\newblock {\em arXiv:2305.18867}, 2023.

\bibitem{MR2373320}
G.~Karch and W.~A. Woyczy\'{n}ski.
\newblock Fractal {H}amilton-{J}acobi-{KPZ} equations.
\newblock {\em Trans. Amer. Math. Soc.}, 360(5):2423--2442, 2008.

\bibitem{MR3587832}
D.~Khoshnevisan and R.~Schilling.
\newblock {\em From {L}\'{e}vy-type processes to parabolic {SPDE}s}.
\newblock Advanced Courses in Mathematics. CRM Barcelona.
  Birkh\"{a}user/Springer, Cham, 2016.
\newblock Edited by Llu\'{\i}s Quer-Sardanyons and Frederic Utzet.

\bibitem{MR0404870}
S.~N. Kru\v{z}kov.
\newblock Generalized solutions of {H}amilton-{J}acobi equations of eikonal
  type. {I}. {S}tatement of the problems; existence, uniqueness and stability
  theorems; certain properties of the solutions.
\newblock {\em Mat. Sb. (N.S.)}, 98(140)(3(11)):450--493, 496, 1975.

\bibitem{MR0241822}
O.~A. Lady\v{z}enskaja, V.~A. Solonnikov, and N.~N. Uralceva.
\newblock {\em Linear and quasilinear equations of parabolic type}.
\newblock Translations of Mathematical Monographs, Vol. 23. American
  Mathematical Society, Providence, R.I., 1968.
\newblock Translated from the Russian by S. Smith.

\bibitem{MR1465184}
G.~M. Lieberman.
\newblock {\em Second order parabolic differential equations}.
\newblock World Scientific Publishing Co., Inc., River Edge, NJ, 1996.

\bibitem{MR2664465}
P.-L. Lions and P.~E. Souganidis.
\newblock Stochastic homogenization of {H}amilton-{J}acobi and
  ``viscous''-{H}amilton-{J}acobi equations with convex
  nonlinearities---revisited.
\newblock {\em Commun. Math. Sci.}, 8(2):627--637, 2010.

\bibitem{MR1246036}
R.~Mikulevi\v{c}ius and H.~Pragarauskas.
\newblock On the {C}auchy problem for certain integro-differential operators in
  {S}obolev and {H}\"{o}lder spaces.
\newblock {\em Liet. Mat. Rink.}, 32(2):299--331, 1992.

\bibitem{MR3931325}
B.~\O{}ksendal and A.~Sulem.
\newblock {\em Applied stochastic control of jump diffusions}.
\newblock Universitext. Springer, Cham, 2019.
\newblock Third edition of [ MR2109687].

\bibitem{MR4044675}
A.~Porretta and P.~Souplet.
\newblock Blow-up and regularization rates, loss and recovery of boundary
  conditions for the superquadratic viscous {H}amilton-{J}acobi equation.
\newblock {\em J. Math. Pures Appl. (9)}, 133:66--117, 2020.

\bibitem{MR3185174}
K.~Sato.
\newblock {\em L\'{e}vy processes and infinitely divisible distributions},
  volume~68 of {\em Cambridge Studies in Advanced Mathematics}.
\newblock Cambridge University Press, Cambridge, 2013.
\newblock Translated from the 1990 Japanese original, Revised edition of the
  1999 English translation.

\bibitem{MR2737806}
L.~Silvestre.
\newblock On the differentiability of the solution to the {H}amilton-{J}acobi
  equation with critical fractional diffusion.
\newblock {\em Adv. Math.}, 226(2):2020--2039, 2011.

\end{thebibliography}
\end{document}